\documentclass[11pt]{article}
\usepackage[margin=1.1in]{geometry}
\usepackage{amsmath,amssymb,amsthm}
\usepackage{booktabs}
\usepackage{graphicx}
\usepackage{hyperref}
\newtheorem{theorem}{Theorem}
\newtheorem{proposition}{Proposition}
\title{Simple symmetric Venn diagrams with 17 and 19 curves}
\author{Chris Dzoba}
\date{September 22, 2026}
\begin{document}
\maketitle

\begin{abstract}
We exhibit simple, rotationally symmetric Venn diagrams with 17 curves and with 19 curves: $n$ Jordan curves carried
to one another by rotation through $2\pi/n$, with every one of the $2^n$ regions present and connected and, since the
diagrams are simple, every crossing on exactly two curves. Symmetric Venn diagrams exist for every prime number of
curves (Griggs, Killian and Savage, 2004), but those diagrams have many curves through a point; simple ones were
known only up to 13 curves (Mamakani and Ruskey, 2014). Four 17-curve and nine 19-curve diagrams were found by a Metropolis walk on
rotation-invariant quadrangulations of the sphere in which regions may temporarily be duplicated, started from the
Griggs--Killian--Savage diagram with its multiple crossings resolved. Every diagram is given by a machine-checkable
certificate; one certificate of each size has been verified by a formal proof in Lean~4. All of the diagrams are
non-monotone, which is why the crossing-sequence searches that found the 11- and 13-curve diagrams could not have
found them.
\end{abstract}
\begin{figure}[p]
\centering
\includegraphics[width=0.92\textwidth]{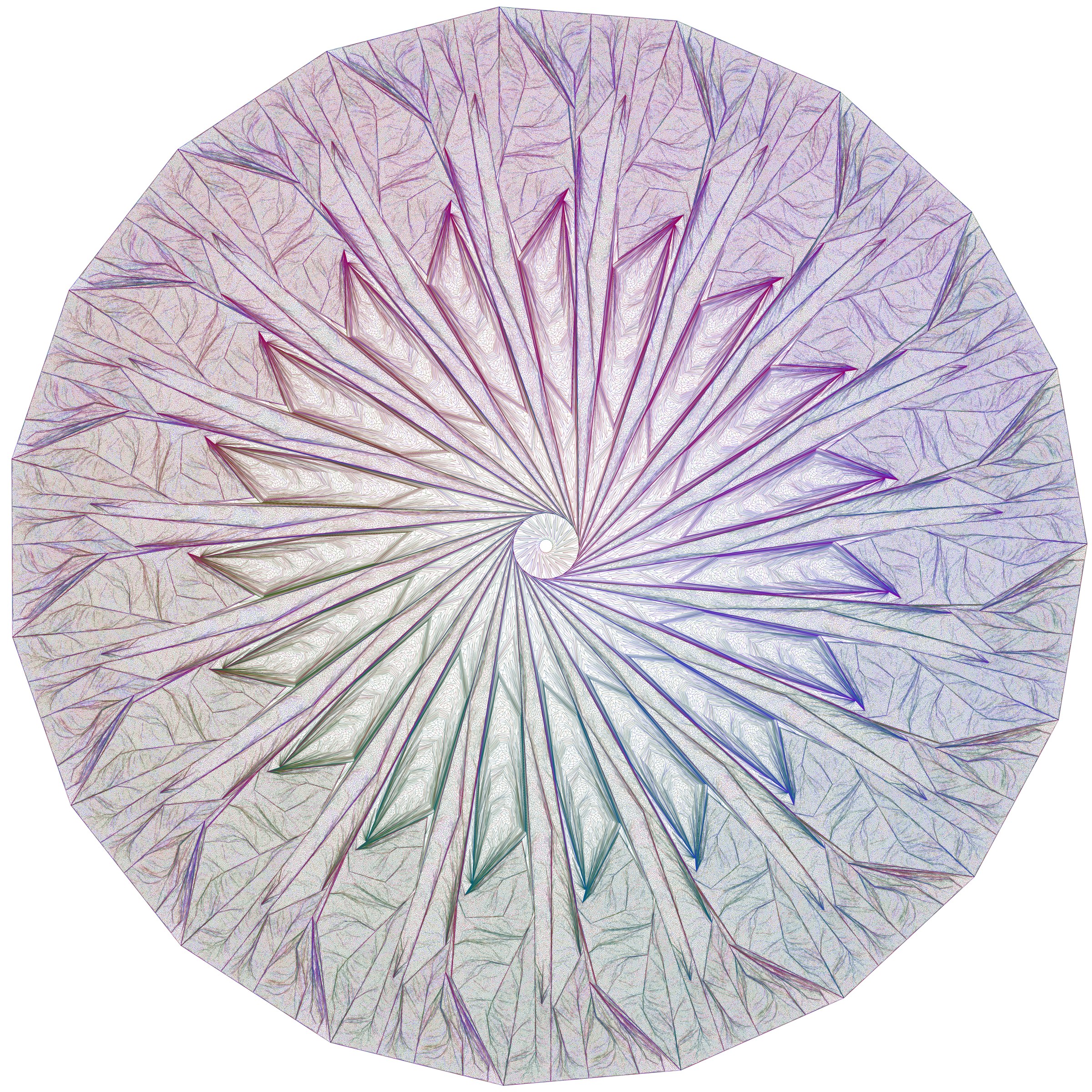}
\caption{One of the nine 19-curve diagrams (certificate \texttt{fcb7ad34}), drawn from its certificate at near-uniform
crossing density by the plotter described in the data repository: 19 curves, $524{,}286$ crossings, every one of the
$524{,}288$ regions present exactly once. The centre hole is the region inside all 19 curves; the outer face is the
region outside all of them. Colours identify curves.}
\label{fig:venn19}
\end{figure}

\section{Statement}
An $n$-Venn diagram is a family of $n$ Jordan curves in the plane such that for every subset $S$ of the curves the
set of points inside exactly the curves of $S$ is nonempty and connected. It is \emph{simple} if no point lies on
three curves, and \emph{symmetric} if a rotation by $2\pi/n$ maps the family to itself. Henderson observed in 1963
that a symmetric $n$-Venn diagram can exist only for prime $n$ \cite{Hen63}; his argument had a gap, and the first
complete proof of his theorem is due to Wagon and Webb \cite{WW08}. Gr\"unbaum asked in 1975 whether they
exist for every prime and gave simple symmetric diagrams with five curves \cite{Gru75}; simple symmetric diagrams with
seven curves were found by Gr\"unbaum and by Edwards in the 1990s \cite{Gru92,Edw98}. Hamburger constructed the first
symmetric 11-Venn diagram in 2002 \cite{Ham02}; it is not simple, and the same holds for the diagrams of Griggs,
Killian and Savage, who then proved that symmetric $n$-Venn diagrams exist for every prime $n$ \cite{GKS04} (see also
\cite{KRSW04}). Simple symmetric diagrams with 11 and 13 curves were found by Mamakani and Ruskey in 2012 and 2014
\cite{MR12,MR14} by a search over monotone diagrams, and the question for larger primes, put forward as the central open
problem by Ruskey, Savage and Wagon \cite{RSW06}, remained open: the recent literature lists simple symmetric $n$-Venn
diagrams as known only for $n\le 13$ \cite{BGMV26}. The survey of Ruskey and Weston \cite{RW97} has the full history.

\begin{theorem}
Simple symmetric Venn diagrams with 17 curves exist, and so do simple symmetric Venn diagrams with 19 curves.
\end{theorem}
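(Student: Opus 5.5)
The proof is by explicit construction followed by mechanical verification. There is no structural argument to give. The plan has three parts: encode a candidate diagram as a finite combinatorial object, state a checkable criterion for when such an object is a simple symmetric Venn diagram, and then exhibit objects for $n=17$ and $n=19$ that pass the check.

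For the encoding, a simple $n$-Venn diagram is taken via its planar dual, following the standard correspondence. The vertices are the $2^n$ regions, labelled by subsets $S\subseteq\{1,\dots,n\}$. The edges join regions whose labels differ in exactly one curve. The faces are the crossings, each a quadrilateral because the diagram is simple. The dual is therefore a spanning subgraph of the hypercube $Q_n$ embedded in the sphere as a quadrangulation. By Euler's formula it has $2^n$ vertices, $2^{n+1}-4$ edges and $2^n-2$ faces, which matches the $524{,}286$ crossings for $n=19$.

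Conversely, I will check the following sufficient conditions on a rotation system $\rho$ on a subgraph $G\subseteq Q_n$:
\begin{itemize}
\item $\rho$ defines a sphere embedding (genus $0$ via Euler's formula) in which every face is a 4-cycle of the form $S,\ S\triangle\{i\},\ S\triangle\{i,j\},\ S\triangle\{j\}$ with $i\ne j$;
\item for each $i$, the edges labelled $i$ form in the primal a single closed curve, meaning the dual edge cut separating $\{S: i\in S\}$ from its complement is one simple cycle in the primal;
\item the cyclic shift $\sigma: i\mapsto i+1 \bmod n$ acting on labels is an automorphism of the embedding that preserves orientation and fixes the faces $\emptyset$ and $[n]$ (the centre and the outer face).
\end{itemize}
Given these, the primal curves are Jordan curves. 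Every region exists and is connected because each label occurs as exactly one vertex. The diagram is simple because each face has degree $4$. Realizing $\sigma$ as a rotation by $2\pi/n$ uses a symmetric drawing, which exists because $\sigma$ acts freely apart from its two fixed faces. All of these conditions are finite and checkable, so a certificate is just $\rho$ listed on one orbit representative of each $\sigma$-orbit. The Lean formalization then only has to confirm these conditions on the data.

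Everything hinges on producing the certificates, and this is the main obstacle. For $n=17,19$ the Griggs--Killian--Savage diagram \cite{GKS04} gives a symmetric but non-simple starting point. I would first perturb each multiple crossing, in a $\sigma$-equivariant way, into simple crossings. This resolution will in general create duplicated or missing regions. I would then run a Metropolis walk over $\sigma$-invariant quadrangulations in which each vertex carries a subset label and duplicates are allowed. The local moves are equivariant face flips or triangle-type curve moves applied to a whole orbit at once. The energy is the number of surplus vertices plus missing labels, and the walk searches for energy zero. This step is the main obstacle. There is no guarantee of success, and the search space is huge (for $n=19$ it is about $2^{19}/19$ orbits). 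Because the known monotone constructions fail at this size, the walk must be allowed to enter non-monotone configurations. Once it reaches zero, the resulting embedding is run through the checker above for one diagram with $n=17$ and one with $n=19$, and that completes the proof.
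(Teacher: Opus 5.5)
Your encoding and checkable criterion match the paper's Proposition and verification list: a cube-square quadrangulation of the sphere carrying every label of $\{0,1\}^n$ exactly once, one closed curve per coordinate, and the label shift acting as an orientation-preserving automorphism. For prime $n$ that shift acts freely away from the two poles, so it is realized by a rotation. That part is fine. The gap is that for a theorem proved by exhibition the witness \emph{is} the proof. Your proposal stops at a search that, by your own account, may fail, so nothing in it shows that a certificate for $n=17$ or $n=19$ exists. The paper supplies exactly this: thirteen explicit certificates (for instance \texttt{c178d7bd} and \texttt{fcb7ad34}), each accepted by independently written checkers, with one of each size formally verified in Lean.

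Moreover, the search as you specify it cannot reach energy zero. Every state is a quadrangulation of the sphere, so $V-E+F=2$ together with $E=2F$ gives $V=F+2$: there are always exactly two more regions than crossings. Resolving the multiple points of the Griggs--Killian--Savage diagram only adds regions. Since that diagram is already Venn, the new regions are all duplicates and nothing is missing at the start, so $V=2^n+D$, with $D=69{,}394$ for $n=17$ and $D=305{,}862$ for $n=19$. The moves you name, face flips and triangle-type (Reidemeister~III) moves, leave the number of crossings unchanged, and hence $V$ unchanged. The surplus is therefore at least $V-2^n=D$ forever.

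You need moves that create and destroy crossings in pairs; that is the role of the paper's lens and bigon insertions and removals. Every move must also preserve the one-curve-per-coordinate property, or an energy-zero state need not be a Venn diagram. The paper's Reidemeister-type moves do both, which is why $E=0$ is exactly a certificate there.

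Even with the full move set, the paper's low-energy 19-curve lineages all stalled on the duplicated label $\{0,3,4\}$, inherited from the resolved start. Its copies had degree well above 3, so triangle flips, which move only degree-3 regions, could not touch them. An added proposal class attacking a duplicate's degree is what released it.
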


The proof is by exhibition. Four 17-curve diagrams and nine 19-curve diagrams are given as certificates in the form
of Section~\ref{sec:encoding}, each accepted by two independently written checkers, and one of each size has been
formally verified in Lean~4 (Section~\ref{sec:lean}).

\section{Encoding and verification}\label{sec:encoding}
Let $D$ be a simple symmetric $n$-Venn diagram, drawn on the sphere with the unbounded region as a face. Its dual
map $M$ has one vertex per region, labelled by the region's membership pattern in $\{0,1\}^n$; one edge per arc,
joining labels that differ in exactly the bit of the arc's curve; and one face per crossing, a 4-cycle with labels
$u,\,u{\oplus}e_a,\,u{\oplus}e_a{\oplus}e_b,\,u{\oplus}e_b$ (a \emph{cube square}). Conversely:

\begin{proposition}
A map on the sphere whose vertices are labelled by distinct elements of $\{0,1\}^n$, all $2^n$ occurring, whose
faces are cube squares, whose edges of each coordinate $i$ form a single cycle through the faces containing them,
and whose vertex rotations are single cycles, is the dual of a simple $n$-Venn diagram; if the label rotation
$x\mapsto(x_1,\dots,x_{n-1},x_0)$ is an orientation-preserving automorphism of the map, the diagram is symmetric.
\end{proposition}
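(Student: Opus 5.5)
The plan is to take the dual of $M$ and check the Venn conditions directly. Let $M$ be the given map on the sphere $S^2$. Its geometric dual $D$ has one vertex per face of $M$, one edge per edge of $M$, and one face per vertex of $M$. Each face of $M$ is a quadrilateral, so every vertex of $D$ has degree $4$. For each coordinate $i$, let $C_i$ be the union of the dual edges of the coordinate-$i$ edges of $M$.

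\emph{Each $C_i$ is a Jordan curve.} In a cube square the two edges of coordinate $a$ are opposite, and so are the two edges of coordinate $b\neq a$. So at each vertex of $D$ the curve $C_a$ enters through one edge and leaves through the opposite edge, and likewise for $C_b$; the two curves genuinely cross there. The hypothesis that the coordinate-$i$ edges form a single cycle through the faces containing them means that following $C_i$ straight through each crossing closes up into one closed walk. A face of $M$ contains only two coordinate-$i$ edges, so each vertex of $D$ is visited at most once by $C_i$. Hence $C_i$ is a simple closed curve. Exactly two curves pass through each vertex of $D$, so no point lies on three curves.

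\emph{The region condition.} I expect this to be the main obstacle: we must show that the face of $D$ dual to a vertex $v$ is exactly the set of points inside the curves $\{i : v_i=1\}$. Inside and outside are fixed after choosing the point at infinity to lie in the face dual to the vertex labelled $0\cdots0$. First, $C_i$ is exactly the union of dual edges separating faces whose labels differ in bit $i$. So the $i$-th coordinate changes precisely when a path in $M$ crosses $C_i$. This gives a parity argument: for a path in the dual-face structure from the $0$ vertex to $v$, $v_i$ equals the parity of the number of crossings with $C_i$, which is the Jordan-curve side indicator. Therefore the face dual to $v$ lies inside exactly the curves in $\{i : v_i=1\}$.

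The region for pattern $S$ is the union of the faces of $D$ whose labels equal $S$. Labels are distinct and all $2^n$ occur, so this is a single face, which is nonempty. The condition that vertex rotations are single cycles makes each face of $D$ an open disk bounded by a single cycle, hence connected. So $D$ is a simple $n$-Venn diagram.

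\emph{Symmetry.} Let $\phi$ be an orientation-preserving automorphism of $M$ that realises the label rotation $x\mapsto(x_1,\dots,x_{n-1},x_0)$. Such a combinatorial automorphism of a spherical map is realised by an orientation-preserving homeomorphism of $S^2$. A periodic orientation-preserving homeomorphism of $S^2$ is conjugate to a rotation (Kerékjártó). So the embedding can be chosen so that $\phi$ acts as rotation by $2\pi/n$, with its fixed points in the faces dual to $0\cdots0$ and $1\cdots1$: these labels are fixed by the label rotation, and $\phi$ has order $n$ because the label rotation does. Since $\phi$ permutes coordinates cyclically, it maps $C_i$ to $C_{i-1}$. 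Putting the point at infinity at the fixed point in the $0\cdots0$ face then gives a planar drawing invariant under rotation by $2\pi/n$.
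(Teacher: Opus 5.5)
Your proposal is correct and follows the same route as the paper's sketch. You dualize to a 4-regular plane graph, chain the coordinate-$i$ edges straight through opposite sides of each crossing to get $C_i$, identify the regions with the vertices of $M$ via their labels, and realize the label rotation as a rotation of the sphere fixing $0^n$ and $1^n$, with $0^n$ as the outer face. You also add details the sketch leaves implicit: the crossing-parity argument matching labels to inside/outside, and Ker\'ekj\'art\'o's theorem. The one step to tighten is that the homeomorphism realizing $\phi$ must be chosen periodic before that theorem applies, for example by building the sphere from standard squares so that $\phi$ acts cellwise.
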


\begin{proof}[Proof sketch]
The map is a cellular embedding on the sphere; its geometric dual is a 4-regular plane graph whose vertices are the
faces (crossings). The edges of coordinate $i$, chained through opposite sides of each crossing, form one closed curve
$C_i$; regions of the arrangement are the vertices of $M$, and by the label condition each point of $\{0,1\}^n$ is
realized by exactly one region, which is a disk. Rotation of the labels acts as a rotation of the sphere fixing the
two vertices $0^n$ and $1^n$; taking $0^n$ as the outer face gives a plane diagram with $n$-fold rotational symmetry.
\end{proof}

A certificate is the list of the $2^n-2$ faces of $M$, each as four $n$-bit labels: $131{,}070$ faces for $n=17$
and $524{,}286$ for $n=19$. Verification recomputes, in seconds: all $2^n$ labels present exactly once, every edge in
exactly two faces ($2^{n+1}-4$ edges), Euler characteristic 2, every face transversal, a single rotation cycle at every
vertex, one cycle per curve with both sides connected, and invariance under label rotation. Two independently written
checkers, a half-edge implementation in C++ and a face-set implementation in Python, accept all thirteen certificates, as
does a third, written later and sharing no code with either, which audits the raw search state directly.

\begin{table}[h]
\centering\small
\begin{tabular}{llll}
\toprule
$n$ & certificate & SHA-256 prefix & non-isomorphic \\
\midrule
17 & \texttt{venn17-local-c3-s2} & \texttt{c178d7bd} & yes \\
17 & \texttt{venn17-gcp-s12} & \texttt{87f981b8} & yes \\
17 & \texttt{venn17-gcp-s14} & \texttt{eec8cd55} & yes \\
17 & \texttt{venn17-gcp-s16} & \texttt{431857aa} & yes \\
19 & \texttt{venn19-closure-s196002} & \texttt{fcb7ad34} & yes \\
19 & \texttt{venn19-closure-s196004} & \texttt{285de8ea} & yes \\
19 & \texttt{venn19-closure-s195001} & \texttt{a09ceae0} & yes \\
19 & \texttt{venn19-closure-s196001} & \texttt{56e7e1a1} & yes \\
19 & \texttt{venn19-closure-s196007} & \texttt{312e8c88} & yes \\
19 & \texttt{venn19-closure-s195002} & \texttt{726d1fb0} & yes \\
19 & \texttt{venn19-fresh-s192015} & \texttt{45a8aee9} & yes \\
19 & \texttt{venn19-fresh-s190002} & \texttt{b46bf5d0} & yes \\
19 & \texttt{venn19-fresh-s192007} & \texttt{d5b20c27} & yes \\
\bottomrule
\end{tabular}
\caption{The thirteen certificates. Within each size the diagrams are pairwise non-isomorphic under cyclic relabelling of
the curves, mirror image and exchange of the two poles (canonical forms compared over all $4n$ such maps); as face
sets the nine 19-curve certificates differ pairwise by between $25{,}460$ and $947{,}188$ faces of $524{,}286$; each of
the three fresh-start certificates shares only about a tenth of its faces with any other.}
\end{table}

\section{Formal verification}\label{sec:lean}
The first 17-curve certificate (\texttt{c178d7bd}) was formally verified in Lean~4 with Mathlib by Justin Grimes \cite{Gri26},
working from the definitions alone and without access to our code. His theorem states that the 17 curves
constructed from the certificate are Jordan curves in the Euclidean plane, carried to one another by a rigid
rotation through $2\pi/17$, with every one of the $2^{17}$ regions path-connected, no point on three curves, and every
crossing a transversal crossing of two curves. The finite parts of the proof use Lean's compiled evaluation
(\texttt{native\_decide}); the development builds from scratch in under an hour.

The first 19-curve certificate (\texttt{fcb7ad34}; JSON form \texttt{ed26b3ba}) was verified by a port of the same
development to $n=19$, carried out by Codex (OpenAI) from Grimes's sources: the dimension-dependent constants were
changed, the finite meridian and sector certificates that the proof consumes were regenerated for 19, and the
vendored topology library was left untouched. The port builds in twenty minutes and proves
\texttt{exists\_simple\_rotational\_venn\_19}: there exist $C:\mathrm{Fin}\,19\to\mathcal P(\mathbb R^2)$ and sides $S$
with \texttt{SimpleRotationalVenn}~$C$~$S$, the same predicate as at 17. Its axiom list is that of the 17-curve proof
under the namespace rename: \texttt{propext}, \texttt{Classical.choice}, \texttt{Quot.sound} and the twenty
\texttt{native\_decide} certificates. We re-ran \texttt{\#print axioms} on the built project to confirm this.

\section{Method}\label{sec:method}
We search the space of rotation-invariant cube-square quadrangulations of the sphere in which several vertices may
carry the same label. The energy of a state is $E = (\text{missing labels}) + \lambda\,(\text{duplicate regions})$;
$E=0$ with $\lambda>0$ is exactly a certificate. Moves are the local Reidemeister-type rewirings of the dual (lens
insertion and removal on a 3-cube, triangle flip, bigon insertion and removal), each applied simultaneously to all $n$
rotation images; every move preserves the sphere, the cube-square faces and the one-cycle-per-curve property, so the
walk never leaves the space of valid symmetric arrangements. Proposals are accepted by the Metropolis rule at
temperature $T$, with half of the proposals targeted at a missing label or a duplicated region.

Allowing duplicates is what makes the search work: with duplicates forbidden, greedy growth of the same
quadrangulations reaches $98.3\%$ of the labels and jams, because the last lens insertions require two missing labels
to coincide geometrically. With duplicates allowed, a missing label can be created next to a duplicate at zero cost and
defects diffuse until they annihilate.

\paragraph{17 curves.} The walk starts from the Griggs--Killian--Savage diagram \cite{GKS04} for $n=17$ with every multiple
crossing resolved into simple crossings, a state with all labels present and $69{,}394$ duplicate regions. With
$\lambda$ ramped from $0.3$ to $1$ over three hours at $T=0.25$, three of fifteen independent runs reached $E=0$; a
fourth diagram was obtained by a further 40-minute cycle from a state with $68$ duplicates. Ten later instrumented
runs of the same recipe finished seven times within three hours, first reaching $E=2n$ after $1.6$--$2.8\times10^9$
proposals and finishing within $1.8\times10^9$ more. The same method finds 11- and 13-curve diagrams from the trivial
starting state in seconds and minutes: on a laptop busy with other work, 200 of 200 seeds produced a simple symmetric
11-Venn diagram, in a median of 0.8 seconds and at most 16 seconds, all 200 pairwise non-isomorphic (up to cyclic
relabelling, mirror image and swapping the poles) and all non-monotone; 11 of 13 seeds produced a 13-curve diagram within
a ten-minute limit, in a median of 52 seconds, again all distinct and all non-monotone.

\paragraph{19 curves: budget.} The resolved starting diagram for $n=19$ has all labels and $305{,}862$ duplicate
regions. Scaled by the number of regions, no 19-curve run in the first three days received the budget that the
17-curve runs needed: the longest had $6\times10^9$ proposals and was still descending when stopped, and every restart
from a low-energy state was shorter than the 17-curve endgame itself.

\paragraph{19 curves: the knot.} Every lineage that reached low energy stalled with the same duplicated label,
$\{0,3,4\}$, whose two instances had degrees 6--9 and shared a neighbour. A region of the dual can move by a triangle
flip only at degree 3, so the walk's targeted proposals, when they landed on such a duplicate, did nothing. The
duplicate is inherited: in the resolved starting diagram the label $\{0,3,4\}$ already appears twice, once as a region
of degree 18. (The resolved 17-curve diagram has the same feature for $\{0,1,4\}$, degrees 4 and 17, and the
17-curve walk happens to grind it down; the ten 17-curve pre-completion states all have their duplicates at degree 3
or 4.) One added proposal class removes the obstruction: when a targeted proposal lands on a duplicate of degree
above 3, it attacks that degree, by a triangle flip at a degree-3 neighbour or a lens insertion on an incident edge,
both ordinary moves under the ordinary acceptance rule. From a frozen $E=38$ state, 25 minutes at $\lambda=0.35$ with
this proposal reduced $\{0,3,4\}$ to multiplicity one; without it, 45 minutes at $\lambda=0.30$ left it untouched.
A three-hour ramp $\lambda: 0.5\to1$ then cooled the state to $E=38$ with two missing labels of rank 11 and no
duplicates.

\paragraph{19 curves: closure.} From that state, continuations differing only in the random seed were run with
$\lambda$ held at 1 and with $\lambda$ ramped from $0.6$, $0.8$ or $0.9$ to 1. Held at 1, five of eight
continuations reached $E=0$, after 4 seconds, 13 seconds, 190 seconds, 4.3 hours and 6.5 hours; of the sixteen
ramped continuations one finished, at the end of its six-hour ramp. The pre-completion states are saved exactly, by
journal undo of the winning move inside the engine, and show three closing mechanisms: a single triangle flip of the
last duplicated region, of degree 3, into the last missing label at Hamming distance exactly 3 (four diagrams, and one of
the fresh-start diagrams below); a
single lens insertion creating both missing labels at once after they had drifted to Hamming distance 1 (one
diagram, and two of the fresh-start diagrams below); and a single lens removal of two adjacent duplicated regions in a state with no missing label (one diagram).

\paragraph{19 curves: fresh start.} The six diagrams above descend from one hand-off state. Three more do not.
Thirty-two 36-hour runs were started from the resolved scaffold on 20 September: twenty with the ordinary proposals
(twelve on the fleet and eight on a laptop), six with the degree-attacking proposal applied to three in ten of the
targeted landings at $T=0.25$, and six with it applied always at $T=0.28$, all with $\lambda$ ramped linearly from
$0.3$ to $1$ over the wall time. Three reached $E=0$: two of the three-in-ten runs, after 32.6 and 33.3 hours, and one
laptop run with the ordinary proposals, after 34.0 hours, at $1.1\times10^{10}$ to $2.1\times10^{10}$ proposals. The
first closed by the triangle flip above, from a state with one missing label of rank 8 and one duplicated label of
rank 11, both copies of degree 3, at Hamming distance 3; the other two closed by the lens insertion, from states with no
duplicate and two missing labels at Hamming distance 1. Each of the three shares only about a tenth of its faces with
any other diagram in the table. At the time of writing the remaining three-in-ten runs stood at best energies 38, 38,
114 and 152, the ordinary runs between 38 and 228, and the always-on runs no lower than 114.

\begin{figure}[p]
\centering
\includegraphics[width=0.95\textwidth]{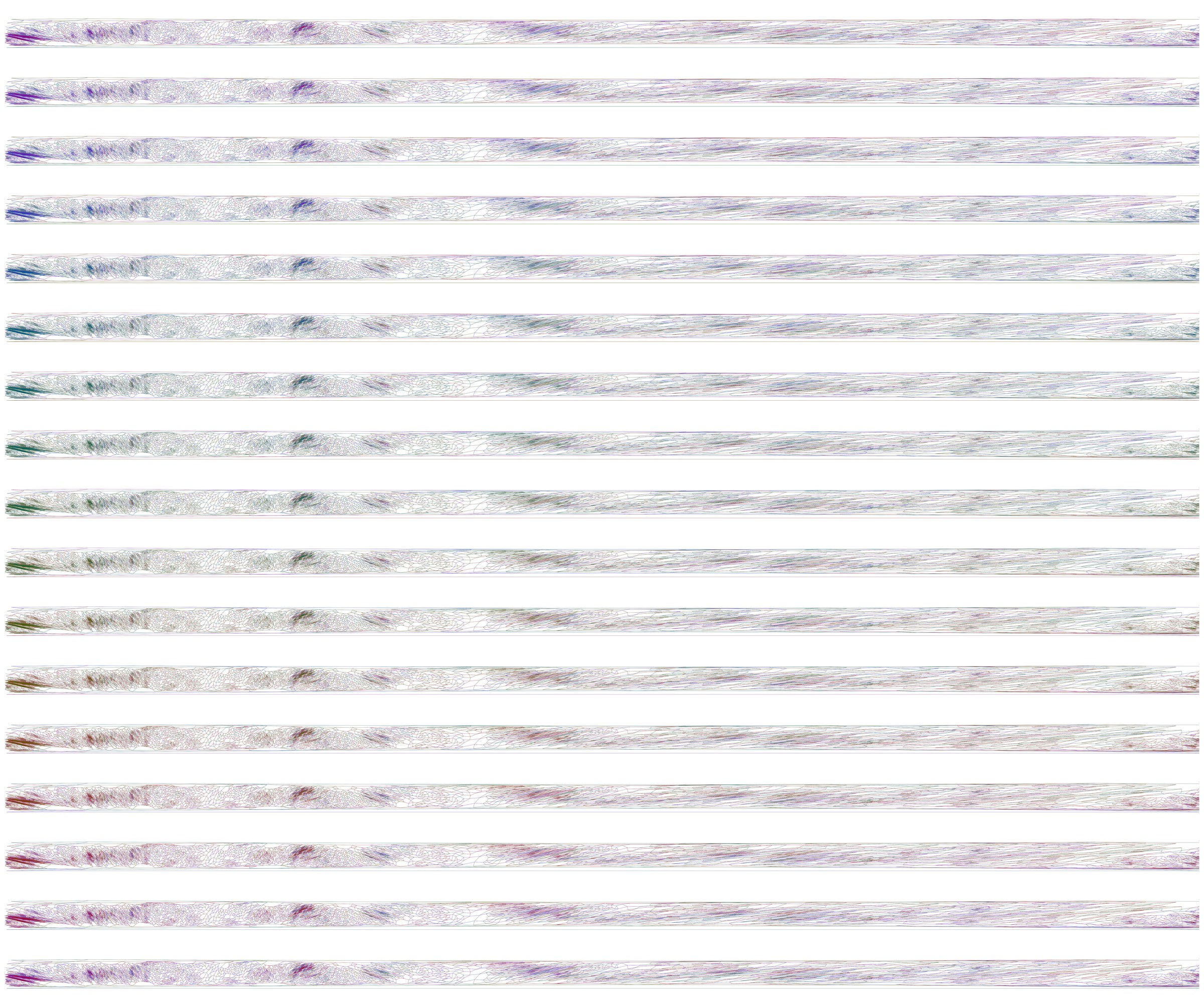}
\caption{The 17-curve diagram \texttt{c178d7bd} unrolled: its seventeen congruent sectors, each a fundamental domain
of the rotation containing $131{,}070/17 = 7{,}710$ crossings, drawn as rows, with the colour of each curve
advancing by one position from row to row. The whole diagram is one row bent around the centre seventeen times.}
\label{fig:sector}
\end{figure}
\section{Properties}
All ten diagrams are non-monotone in the sense of Bultena, Gr\"unbaum and Ruskey \cite{BGR99}: in the first 19-curve certificate, $3{,}590$ of the $27{,}594$ region orbits
($13\%$) lack a neighbouring region of rank one lower or one higher, and the 17-curve certificates behave the same
way. Since monotone diagrams are exactly those that can be drawn with convex curves \cite{BGR99}, none of the diagrams
reported here has a convex drawing. Every previously published simple symmetric Venn diagram with $n\ge 11$ is
monotone, and the crossing-sequence representation used to find them \cite{MR14,MMR12} enumerates monotone diagrams by
construction, so the diagrams reported here were outside the search space of those methods; Mamakani and Ruskey
themselves noted that the size of their search sequences grows about tenfold from 13 to 17 curves \cite{MR14}. Minimum region degree is 3 (a degree-2 region is impossible in a simple
Venn diagram with $n\ge3$); in the first 19-curve certificate the degree histogram is $187{,}340$ regions of degree 3,
$199{,}120$ of degree 4, $98{,}686$ of degree 5, and a tail to degree 16, with the two poles of degree 19 and every
rank-1 and rank-18 region of degree 11.

\section{Data}
Certificates, pre-completion states, logs, the search code with its checksums, the independent checkers and both
Lean developments are at \url{https://github.com/dzoba/venn17} and archived on Zenodo at \url{https://doi.org/10.5281/zenodo.22885650}.

\section*{Tool and computational resource disclosure}
This work was carried out by two AI systems working under the author's direction over five days, and this note
discloses their roles in full, in the spirit of the Leiden Declaration on Artificial Intelligence and Mathematics. Claude (Anthropic) formulated the dual-map encoding, designed and implemented the
search (the relaxed walk, its moves, energy and annealing schedules), diagnosed the failure at 19 from the run records,
designed and implemented the added proposal class and the capture of pre-completion states, ran the searches and the
verification, performed the analyses reported here and drafted this note. Codex (OpenAI) constructed the resolved
Griggs--Killian--Savage starting states, wrote independent validators for every certificate, identified the need for
the bigon moves, built exact local solvers used to diagnose the earlier plateau, and ported the Lean development to
19 curves. The author set the goals and strategy, decided what to pursue and what to abandon, supplied the computing
resources, and is responsible for the content. Computing: one 16-core laptop throughout, and for the 19-curve runs
cloud instances of up to 24 vCPUs each, never more than about a hundred vCPUs at once and none for longer than two
days; total cloud cost under five hundred dollars. Software: the search engine, checkers and plotter in the data
repository (C++ and Python with NumPy); Lean~4 with Mathlib for the formal proofs; \texttt{tectonic} for this document.

\section*{Acknowledgments}
We thank Justin Grimes for the formal verification of the 17-curve certificate in Lean, carried out independently and
without knowledge of our construction, on which the 19-curve verification is built. We thank Khalegh Mamakani for his
encouragement from the first day, for pointing out that the earlier searches excluded non-monotone diagrams by
construction, and for his advice on how this work should be presented; and Carla Savage for her kind reply and,
with Jerrold Griggs and Charles Killian, for the construction from which every one of these searches
begins. The survey of Ruskey and Weston was our map of the field throughout.

\end{document}